\renewcommand{\le}{\leqslant}
\renewcommand{\ge}{\geqslant}
\newcommand{\R}{\mathbb{R}}
\newtheorem{theorem}{Theorem}
\begin{document}
\title{Acute sets}
\author{D. Zakharov}
\date{}
\maketitle

\begin{abstract} A set of points in $\R^d$ is {\it acute}, if any three points from this set form an acute angle. In this note we construct an acute set in $\R^d$ of size at least $2^{d/2}$.  \end{abstract}
A set of points in $\R^d$ is {\it acute}, if any three points from this set form an acute angle.
In 1972 Danzer and Gr\"{u}nbaum \cite{DG} posed the following question: what is the maximum size $f(d)$ of an acute set in $\R^d$? They proved a linear lower bound $f(d) \ge 2d-1$ and conjectured that this bound is tight. However, in 1983 Erd\H os and F\"uredi \cite{EF} disproved this conjecture in large dimensions. They gave an exponential lower bound
$$
f(d) \ge \frac{1}{2} \left ( \frac{2}{\sqrt{3}} \right )^d > 0,5 \cdot 1,154^d.
$$
Their proof is a very elegant application of the probabilistic method. One drawback of their method is that only the existence of an acute set of such size is proven, with no possibility to turn it into an explicit construction.

In 2011 Harangi \cite{H} refined the approach of Erd\H os and F\"uredi and improved their bound to
$$
f(d) \ge c \left ( \sqrt[10]{\frac{144}{23}} \right )^d>c \cdot 1,2^d.
$$

In this note we prove the following recurrent inequality:
\begin{theorem}\label{thm1}
$f(d+2)\ge 2f(d)$.
\end{theorem}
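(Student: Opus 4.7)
My plan is to start with an acute set $A = \{a_1, \ldots, a_n\} \subset \R^d$ of size $n = f(d)$ and lift it to $2n$ points in $\R^{d+2}$. I would pick unit vectors $u_1, \ldots, u_n \in \R^2$ with $u_i \ne \pm u_j$ for every $i \ne j$ (for instance, $u_i = (\cos\theta_i, \sin\theta_i)$ for distinct $\theta_i \in (0, \pi)$), fix a small $\varepsilon > 0$, and set
$$
p_i = (a_i, \varepsilon u_i), \qquad q_i = (a_i, -\varepsilon u_i) \in \R^{d+2}.
$$
The claim is that $B = \{p_1, \ldots, p_n, q_1, \ldots, q_n\}$ is acute provided $\varepsilon$ is small enough.

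To verify acuteness I would split the triangles in $B$ by the multiset of $a$-indices of their vertices. Since each $a_i$ is used by exactly two points of $B$, the multiset either has three distinct entries or exactly one repeated pair. In the first case the dot product at any vertex has the form $(a_{i_s}-a_{i_r})\cdot(a_{i_t}-a_{i_r})+O(\varepsilon^2)$; the first term is positive and bounded below by the minimum over the finitely many triples of $A$, so choosing $\varepsilon$ small enough makes this case routine. The second case, where the triangle is $\{p_i, q_i, r\}$ with $r \in \{p_j, q_j\}$ and $j \ne i$, is the heart of the argument. A direct expansion gives dot products $2\varepsilon^2(1 \mp u_i\cdot u_j)$ at $p_i$ and $q_i$ and $|a_i - a_j|^2$ at $r$ (the $\varepsilon^2$ contribution at $r$ cancels since $|u_i|=|u_j|=1$). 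All three expressions are strictly positive precisely because $|u_i \cdot u_j| < 1$, which is our non-antipodality hypothesis.

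I expect the main obstacle to be choosing the construction so that the degenerate triples (those with a repeated $a$-index) are acute. A naive attempt with only one extra coordinate, $p_i = (a_i, +\varepsilon)$ and $q_i = (a_i, -\varepsilon)$, fails because for any $j \ne i$ the segment $\overline{p_i q_i}$ is orthogonal to $\overline{p_i p_j}$, yielding a right angle at $p_i$. Adding a second new coordinate \emph{and} insisting that the direction vectors $u_i$ are pairwise neither equal nor antipodal is exactly what breaks this orthogonality; the rest of the argument is a perturbation from $A$'s own acuteness.
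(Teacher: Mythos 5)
Your proof is correct and is essentially the same construction as the paper's: doubling each point $a_i$ into $(a_i,\pm\varepsilon u_i)$ with the $\pm u_i$ pairwise distinct, bounding the perturbation by $O(\varepsilon^2)$ against the minimum dot product of the original acute set for triples with distinct base points, and using non-antipodality to handle the triples containing a pair $p_i,q_i$. The case analysis and the key computations (including the cancellation at the third vertex) match the paper's argument exactly.
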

Theorem~\ref{thm1} easily implies the bound
$$
f(d) \ge 2^{\frac d2+1} \ \  \ \ \ \text{for }\ \ \ n\ge 4,
$$
since it is known \cite{H} that $f(4) \ge 8$ and $f(5) \ge 12$.

The proof of Theorem~\ref{thm1} is explicit and allows to construct acute sets effectively.\\

The best known upper bound on $f(d)$ is $f(d)\le 2^d-1$, and follows from the main result of \cite{DG}. Danzer and Gr\"unbaum proved that if a set $S$ of points in $\R^d$ determines only acute and right angles, then $|S|\le 2^d$. Moreover, if $|S|=2^d$, then $S$ must be an affine image of a $d$-dimensional cube.
\\

\begin{proof}[Proof of Theorem~\ref{thm1}]

 The scalar product of two vectors $u,v$ is denoted by $\langle u,v\rangle$. Take the largest acute set $X \subset \R^d$, $|X| = f(d)$. Put $$s:=\min\{\langle y-x,z-x\rangle: x, y, z \in X, x \neq y, x \neq z\}.$$ Since the set $X$ is acute, we have $s>0$, and we can take a positive number $r$ such that $4r^2 < s$.

 For each $x \in X$ choose a point $\phi(x)$ on a circle of radius $r$ with center in the origin so that all points $\pm \phi(x)$ are distinct.
Construct a set $Y \subset \R^{d+2}$:
$$
Y := \{ (x, \pm \phi(x)), x \in X\}
$$
Clearly, $|Y|=2|X|=2f(d)$. Our aim is to prove that $Y$ is acute. Take three distinct points $\tilde x,\tilde y, \tilde z \in Y$, where
$$
\tilde x = (x, a\phi(x)),\ \ \ \tilde y = (y, b\phi(y)),\ \ \ \tilde z = (z, c\phi(z)),\ \ \ a,b,c\in\{\pm 1\}.
$$

Suppose that $x \neq y$ and $x \neq z$. Then
\begin{equation} \label{1}
\langle\tilde y - \tilde x,\tilde z - \tilde x\rangle=\langle y-x,z-x\rangle+\langle b\phi(y)-a\phi(x),c\phi(z)-a\phi(x)\rangle.
\end{equation}
The first scalar product on the right hand side is at least $s$ by the definition of $s$, while the second scalar product is at most $4r^2$. By the choice of $r$, the sum of these two scalar products is positive, which means that the angle $(\tilde y,\tilde x, \tilde z)$ is acute.

Suppose that $x=y$ (the case $x=z$ is treated in the same way). We have $a+b=0$, so
$$
\langle\tilde y - \tilde x,\tilde z - \tilde x\rangle=\langle b\phi(y)-a\phi(x),c\phi(z)-a\phi(x)\rangle=\langle 2a\phi(x), a\phi(x)-c\phi(z)\rangle=2\big(\|\phi(x)\|^2\pm \langle\phi(x),\phi(z)\rangle\big) >0,
$$
because $\phi(x) \neq \pm\phi(z)$. Thus, the angle $(\tilde y,\tilde x, \tilde z)$ is acute in this case as well.

We conclude that each angle in $Y$ is acute. The theorem is proved.
\end{proof}

 \textsc{Acknowledgements}: We would like to thank Andrey Kupavskii for discussions that helped to improve the main result, as well as for his help in preparing this note. We would also like to thank Prof. Raigorodskii for introducing us to this problem and for his constant encouragement.

\end{document}